\documentclass[leqno,english,12pt]{amsart}
\usepackage[active]{srcltx}
\usepackage{color}

\usepackage{tikz} 
\usepackage{pgfplots}
\pgfplotsset{compat=1.18} 

\usepackage{epsfig}
\usepackage{amssymb}
\usepackage{latexsym}
\usepackage[latin1]{inputenc}
\usepackage{amsmath,amsthm,amssymb,babel}
\usepackage[colorlinks=true, linkcolor=blue, citecolor=red, urlcolor=blue]{hyperref}
\newtheorem{theorem}{Theorem}

\newtheorem{lemma}{Lemma}
\newtheorem{remark}{Remark}

\def\neweq#1{\begin{equation}\label{#1}}
\def\endeq{\end{equation}}

\def\phi{\varphi}

\begin{document}

\title[Minimum principle for $2$-Hessian problems]{\bf  Minimum principles and a priori estimates for $2$-Hessian problems}


\author[ C. Enache]{Cristian Enache}

\address{Cristian Enache
\hfill\break\indent Department of Mathematics and Statistics
\hfill\break\indent American University of Sharjah 
\hfill\break\indent P.O. Box 26666, Sharjah, United Arab Emirates.
\hfill\break\indent {\tt cenache@aus.edu}}

\keywords{ fully nonlinear elliptic equations, 2-Hessian equations, minimum principles, a priori bounds.\\
\indent 2020 {\it Mathematics Subject Classification.} 35J60; Secondary 35B50, 35B45, 35J25.}


\date{\today}

\begin{abstract}
{\footnotesize
In this paper we investigate a class of $2$-Hessian equations and establish a minimum principle for a $P$-function in the sense of L.E. Payne (see R. Sperb \cite{Sp81}). The analysis is based on a sharp matrix inequality providing an estimate for a suitable combination of second-order partial derivatives of the solution. Exploiting this estimate, we derive a differential inequality for the associated $P$-function and obtain a minimum principle in higher dimensions under a convexity assumption. As an application of our results, together with convexity results established in X.-N. Ma and L. Xu \cite{MX08}, P. Liu, X.-N. Ma and L. Xu \cite{LMX10}, P. Salani \cite{Sa12}, and Y. Ye \cite{Ye13}, we derive a priori bounds for solutions of several classical $2$-Hessian boundary value problems.
}
\end{abstract}

\maketitle

\section{Introduction}

Let $\Omega \subset \mathbb{R}^{N}$, $N \geq 2$, be a bounded convex domain.
This paper is concerned with a class of Dirichlet boundary value problems
for the $2$-Hessian operator, namely
\begin{equation}
\left\{
\begin{array}{ll}
S_{2}(D^{2}u)=f(u) & \text{in } \Omega, \\[1mm]
u=0 & \text{on } \partial \Omega,
\end{array}
\right.
\tag{1.1}
\label{eq:1.1}
\end{equation}
where $f$ is a positive $C^{1}$ function.
By the strong maximum principle, any classical solution of
\eqref{eq:1.1} satisfying the ellipticity condition is negative in $\Omega$,
a fact that will be implicitly used throughout the paper.

Here $D^{2}u$ denotes the Hessian matrix of $u(\mathbf{x})$, and
$S_{2}(D^{2}u)$ is the $2$-Hessian operator, defined as the second elementary
symmetric function of the eigenvalues $\lambda_{1},\dots,\lambda_{N}$ of
$D^{2}u$, namely
\begin{equation}
S_{2}(D^{2}u)
:= \sum_{1 \le i < j \le N} \lambda_i \lambda_j .
\tag{1.2}
\label{eq:1.2}
\end{equation}
The operator $S_{2}(D^{2}u)$ is homogeneous of degree $2$ and elliptic only
when restricted to the class of $2$-convex functions
\begin{equation}
\Gamma_{2}(\Omega)
:= \left\{ u \in C^{2}(\Omega) :
S_{i}(D^{2}u) \ge 0 \text{ in } \Omega,\ i=1,2 \right\},
\tag{1.3}
\label{eq:1.3}
\end{equation}
where $S_{1}(D^{2}u)=\mathrm{tr}(D^{2}u)=\Delta u$.
Accordingly, problem \eqref{eq:1.1} is said to be elliptic for a given
solution $u(\mathbf{x})$ if the matrix
\begin{equation}
A^{*} := \left( S_{2}^{ij}(D^{2}u) \right),
\qquad
S_{2}^{ij}(D^{2}u)
:= \frac{\partial S_{2}(D^{2}u)}{\partial u_{,ij}},
\tag{1.4}
\label{eq:1.4}
\end{equation}
is positive definite.
Throughout the paper we work exclusively with classical solutions of
\eqref{eq:1.1} satisfying this ellipticity condition, which we refer to as
\emph{admissible solutions}. The existence and qualitative properties of admissible solutions
to problems of type \eqref{eq:1.1} are well understood;
see, for instance, N.S.~Trudinger~\cite{Tr95}.

The study of Hessian equations has a long history, starting with the
foundational works of L.A.~Caffarelli, L.~Nirenberg and J.~Spruck
on fully nonlinear elliptic equations, together with the seminal
contributions of N.S.~Trudinger, or K.S. Chou and X.J. Wang (see, for instance, \cite{CNS84,Tr95, CW01}). More recently, considerable attention has been devoted to convexity and
power-convexity properties of solutions to $2$-Hessian problems in bounded
convex domains.
We refer, among others, to
\cite{MX08,LMX10,Sa12,Ye13},
where it is shown that appropriate monotone transformations
(such as power or logarithmic transforms)
of solutions enjoy strong convexity properties.

Motivated by the classical $P$-function method introduced in the late 1970s by
L.E.~Payne and G.A.~Philippin
(see, for instance, \cite{PP79,Sp81}),
we introduce the auxiliary function

\begin{equation}
\Phi(\mathbf{x},\alpha)
:= |\nabla u|^{2}
+ 2\alpha \int_{u}^{0} f^{1/2}(s)\,ds,
\tag{1.5}
\label{eq:1.5}
\end{equation}
where $u(\mathbf{x})$ is an admissible solution of \eqref{eq:1.1} and
$\alpha \in \mathbb{R}$. Maximum and minimum principles for functions of this type have proved to be a powerful tool in the study of fully nonlinear problems (see, for instance, \cite{PP79,Sp81,Ma99, Ma00, PS04, EMP21} and some references therein). In particular, the following maximum principle is already known:

\begin{theorem}[C.~Enache {\cite{En10}}]
Assume that $f' \ge 0$ and that
$u(\mathbf{x}) \in C^{3}(\Omega) \cap C^{2}(\overline{\Omega})$
is an admissible solution of \eqref{eq:1.1} with convex level sets.
Then the function
$\Phi \left(\mathbf{x},(N(N-1)/2)^{-1/2}\right)$
attains its maximum on $\partial \Omega$.
\end{theorem}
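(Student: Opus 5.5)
The plan is a Hopf-type argument: derive an elliptic differential inequality for $\Phi$ with $\alpha=\alpha_0:=(N(N-1)/2)^{-1/2}$, with respect to the operator $L:=S_2^{ij}\partial_{ij}$, and then apply the classical maximum principle. Write $\Phi=|\nabla u|^2+2\alpha_0 F(u)$ with $F(u)=\int_u^0 f^{1/2}$. Then
\[
\Phi_{,i}=2u_{,k}u_{,ki}-2\alpha_0 f^{1/2}u_{,i},
\]
\[
S_2^{ij}\Phi_{,ij}=2S_2^{ij}u_{,ki}u_{,kj}+2u_{,k}S_2^{ij}u_{,kij}-2\alpha_0 f^{1/2}S_2^{ij}u_{,ij}-\alpha_0 f^{-1/2}f' S_2^{ij}u_{,i}u_{,j}.
\]

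The individual terms are handled as follows.
\begin{itemize}
\item Differentiating the equation gives $S_2^{ij}u_{,ijk}=f'u_{,k}$.
\item Euler homogeneity gives $S_2^{ij}u_{,ij}=2S_2=2f$.
\item Since $f'\ge0$, the terms carrying $f'$ combine to $f'\bigl(2|\nabla u|^2-\alpha_0 f^{-1/2}S_2^{ij}u_{,i}u_{,j}\bigr)$. Its sign is not obvious, so I will keep it and later control it through the gradient-term structure.
\end{itemize}

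The core inequality concerns $S_2^{ij}u_{,ki}u_{,kj}$. In an eigenframe of $D^2u$ this equals $\sum_i(S_1-\lambda_i)\lambda_i^2=S_1S_2\cdot 2 - \ldots$; more precisely, $\sum_i(S_1-\lambda_i)\lambda_i^2=S_1(S_1^2-2S_2)-(S_1^3-3S_1S_2+3S_3)=S_1S_2-3S_3$. The Newton–Maclaurin inequalities give $S_1\ge \sqrt{2N/(N-1)}\,S_2^{1/2}$, which is comparable to $\alpha_0^{-1}$ times the normalizing constant. So I aim for
\[
2S_2^{ij}u_{,ki}u_{,kj}-4\alpha_0 f^{3/2}\ge 0 \quad\text{modulo gradient terms},
\]
using the convexity of the level sets to handle $S_3$. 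The natural route is to work at a point where $\nabla u\neq0$ and rotate coordinates so that $\nabla u=|\nabla u|e_N$. Convexity of the level sets (with $u<0$ inside) means the tangential block $(u_{,\alpha\beta})_{\alpha,\beta<N}$ is nonnegative definite. Then use $\Phi_{,i}$ to express $u_{,Ni}=\alpha_0 f^{1/2}\delta_{iN}+\Phi_{,i}/(2|\nabla u|)$. Substituting this kills the mixed entries $u_{,N\alpha}$ up to gradient terms and sets $u_{,NN}=\alpha_0 f^{1/2}$.

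After that substitution the Hessian is block diagonal, $\operatorname{diag}(B,\alpha_0 f^{1/2})$ with $B\ge0$, and the equation becomes $S_2(B)+\alpha_0 f^{1/2}\,\mathrm{tr}B=f$. The expression to estimate reduces to the elementary symmetric functions of $B$ plus the $u_{,NN}$ contributions. Likewise, $S_2^{ij}u_{,i}u_{,j}=|\nabla u|^2\,\mathrm{tr}B$, which makes the $f'$ term
\[
f'|\nabla u|^2\bigl(2-\alpha_0 f^{-1/2}\mathrm{tr}B\bigr).
\]

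The main obstacle is showing that both resulting quantities have the right sign for this specific $\alpha_0$. Concretely I need:
\begin{enumerate}
\item $\alpha_0 f^{-1/2}\mathrm{tr}B\le 2$, or a compensation of this term by the quadratic term;
\item $\sum_i(S_1-\lambda_i)\lambda_i^2\ge 2\alpha_0 f^{3/2}$.
\end{enumerate}
I expect these to follow from Maclaurin-type inequalities on $B\ge0$ in $N-1$ variables combined with the constraint $S_2(B)+\alpha_0 f^{1/2}\mathrm{tr}B=f$. I will first try a Lagrange-multiplier or convexity argument: minimize the quadratic expression under that constraint, expecting the extremal case $B$ to be a multiple of the identity with all eigenvalues equal to $\alpha_0 f^{1/2}$, which is exactly where $S_2=\frac{N(N-1)}{2}\alpha_0^2 f=f$. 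That equality case is why this particular $\alpha_0$ appears.

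Once the inequality $L\Phi+W_k\Phi_{,k}\ge0$ holds with $W_k$ bounded away from critical points, I will treat the critical points of $u$ separately. A maximum of $\Phi$ at an interior critical point $x_0$ would force, by the same computation with $\nabla u=0$, $L\Phi=2S_2^{ij}u_{,ki}u_{,kj}-4\alpha_0f^{3/2}\ge0$ via the Maclaurin bound. From there I will either argue with a strict version of the inequality or perturb $\alpha$ slightly and pass to the limit. Hopf's maximum principle then gives that $\max\Phi$ is attained on $\partial\Omega$.
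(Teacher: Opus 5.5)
Your framework is the right one: compute $S_2^{ij}\Phi_{,ij}$, let the level sets control the $f'$-terms, bound the second-order term from below, and apply Hopf. Both inequalities you list are also true. Note that the paper gives no proof of this statement (it is quoted from Enache's earlier work), so the comparison is only with the machinery of Section 3.

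Item (1) is immediate. Since $B\ge 0$, we have $S_2(B)\ge 0$, hence $\alpha_0 f^{-1/2}\mathrm{tr}B\le 1$. More precisely, the $f'$-terms equal $\frac{f'}{f}|\nabla u|^2\bigl(f+S_2(B)\bigr)$ up to terms linear in $\nabla\Phi$. This is the only place where convexity of the level sets is needed (compare (3.4) and (3.11)).

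\textbf{The gap is item (2).} It is the heart of the proof, and you leave it as an expectation. The route you sketch for it also points the wrong way. Convexity of the level sets cannot ``handle $S_3$'': with $B\ge 0$ it makes $S_3=S_3(B)+\alpha_0 f^{1/2}S_2(B)$ larger, which is the unfavourable direction. A Lagrange-multiplier minimization over a constraint that holds only modulo $\nabla\Phi$-terms would also need extra care.

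The missing idea is Newton's inequality. With $E_k=S_k/\binom Nk$, in $\Gamma_2$ one has $E_3\le E_2^2/E_1\le E_1E_2$, i.e.\ $3S_3\le\frac{N-2}{N}S_1S_2$. Maclaurin gives $S_1\ge N\alpha_0S_2^{1/2}$. Hence, pointwise for every admissible solution, with no substitution and no level-set information,
\[
S_2^{ij}u_{,ki}u_{,kj}=S_1S_2-3S_3\ge \frac{2}{N}S_1S_2\ge 2\alpha_0S_2^{3/2}=2\alpha_0f^{3/2}.
\]
Equality holds only when $D^2u$ is a multiple of the identity, and this is what singles out $\alpha_0$.

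\textbf{The critical-point step is also left open, and a ``strict version'' cannot exist.} For $f\equiv\mathrm{const}$ on a ball, $u=\frac c2(|\mathbf{x}|^2-R^2)$ with $c=\alpha_0f^{1/2}$ gives $\Phi(\cdot,\alpha_0)\equiv c^2R^2$. There are two ways to close this step.

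First, perturbation works if you go downward. For $0<\alpha<\alpha_0$ the same computation gives $L\Phi+W_k\Phi_{,k}\ge 4(\alpha_0-\alpha)f^{3/2}$ off the critical set. At a critical point $\mathbf{x}_0$, $D^2u(\mathbf{x}_0)\ge 0$ by convexity of the sublevel sets. An interior maximum there would force $\Phi_{,ii}=2\lambda_i(\lambda_i-\alpha f^{1/2})\le 0$, hence $f=S_2\le\binom N2\alpha^2 f<f$, which is the mirror image of (3.16)--(3.21). Then let $\alpha\uparrow\alpha_0$.

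Second, and simpler, you can avoid the singular drift altogether. Use $u_{,lj}u_{,l}=\alpha_0f^{1/2}u_{,j}+\frac12\Phi_{,j}$ only inside $S_2^{ij}u_{,i}u_{,l}u_{,lj}$. This gives the exact identity
\[
L\Phi-\frac{f'}{2f}S_2^{ij}u_{,i}\Phi_{,j}=\frac{f'}{f}\Bigl(2f|\nabla u|^2-S_2^{ij}u_{,i}u_{,l}u_{,lj}\Bigr)+2\Bigl(S_2^{ij}u_{,ki}u_{,kj}-2\alpha_0f^{3/2}\Bigr).
\]
The first bracket equals $|\nabla u|^2\bigl(f+S_2(B)\bigr)\ge 0$ off the critical set and vanishes on it. The drift coefficient is continuous on all of $\Omega$, so Hopf's principle applies directly and no separate treatment of critical points is needed.
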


In dimension $N=2$, the $2$-Hessian operator reduces to the Monge--Amp\`{e}re
operator, and both maximum and minimum principles for $\Phi$ are available:

\begin{theorem}[C.~Enache {\cite{En14}}]
Assume that $N=2$ and that
$u(\mathbf{x}) \in C^{3}(\Omega) \cap C^{2}(\overline{\Omega})$
is an admissible solution of \eqref{eq:1.1}.
\begin{itemize}
\item[(i)]
If $f' \ge 0$ and
$\alpha \in (-\infty,-1] \cup [0,1]$,
then $\Phi(\mathbf{x},\alpha)$ attains its maximum on $\partial \Omega$.
\item[(ii)]
If $f' \le 0$ and
$\alpha \in [-1,0) \cup [1,\infty)$,
then $\Phi(\mathbf{x},\alpha)$ attains its minimum on $\partial \Omega$.
\end{itemize}
\end{theorem}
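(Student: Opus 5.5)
The plan is to apply the $P$-function method with the linearized operator
$$L:=S_2^{ij}(D^2u)\,\partial_{ij},$$
which is uniformly elliptic on compact subsets because $u$ is admissible. For $N=2$ the matrix $A^*=(S_2^{ij})$ is the cofactor matrix of $D^2u$, so $S_2^{11}=u_{,22}$, $S_2^{22}=u_{,11}$ and $S_2^{12}=-u_{,12}$. This gives the identities
$$S_2^{ij}u_{,jk}=f\,\delta_{ik},\qquad S_2^{ij}u_{,ij}=2f,\qquad S_2^{kl}u_{,jk}u_{,jl}=f\,\Delta u .$$
Differentiating \eqref{eq:1.1} gives $S_2^{kl}u_{,klj}=f'u_{,j}$. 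Since $u$ is admissible and $N=2$, $D^2u$ is positive definite and $u$ is strictly convex. I will therefore show that, away from the unique critical point of $u$ (its minimum point), $\Phi$ satisfies an elliptic inequality of the form $L\Phi+W_k\Phi_{,k}\ge0$ (respectively $\le 0$). The critical point will be handled separately.

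First I compute the derivatives of $\Phi$:
$$\Phi_{,k}=2u_{,j}u_{,jk}-2\alpha f^{1/2}u_{,k},$$
$$\Phi_{,kl}=2u_{,jk}u_{,jl}+2u_{,j}u_{,jkl}-2\alpha f^{1/2}u_{,kl}-\alpha f^{-1/2}f'u_{,k}u_{,l}.$$
Contracting with $S_2^{kl}$ and using the identities above yields
$$L\Phi=2f\Delta u+2f'|\nabla u|^2-4\alpha f^{3/2}-\alpha f'f^{-1/2}S_2^{kl}u_{,k}u_{,l}.$$
At a point where $\nabla u\neq0$, I rotate coordinates so that $u_{,1}=|\nabla u|$ and $u_{,2}=0$. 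Then
$$u_{,11}=\alpha f^{1/2}+\frac{\Phi_{,1}}{2u_{,1}},\qquad u_{,12}=\frac{\Phi_{,2}}{2u_{,1}},\qquad u_{,22}=\frac{f+u_{,12}^2}{u_{,11}},\qquad S_2^{kl}u_{,k}u_{,l}=u_{,22}u_{,1}^2 .$$
Substituting these and collecting every term containing $\Phi_{,1}$ or $\Phi_{,2}$ into $W_k\Phi_{,k}$ (the coefficients $W_k$ are smooth where $\nabla u\ne0$), the terms free of $\nabla\Phi$ reduce to
$$L\Phi+W_k\Phi_{,k}=2f^{3/2}\Bigl(\frac1\alpha-\alpha\Bigr)+f'|\nabla u|^2 .$$
The quantity $\frac1\alpha-\alpha$ is $\ge0$ exactly for $\alpha\in(-\infty,-1]\cup(0,1]$ and $\le0$ exactly for $\alpha\in[-1,0)\cup[1,\infty)$. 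Combined with the sign of $f'$, this produces the correct sign in cases (i) and (ii). The value $\alpha=0$ must be treated directly, since the substitution above divides by $\alpha$. In that case $L\Phi=2f\Delta u+2f'|\nabla u|^2>0$ when $f'\ge0$, which is enough for (i). In case (ii) the value $\alpha=0$ is excluded, so no separate treatment is needed there. Hopf's maximum principle on $\Omega$ minus the critical point then rules out interior extrema at noncritical points.

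The main obstacle is the critical point $\mathbf{x}_0$ of $u$. At $\mathbf{x}_0$ we have $\Phi_{,kl}=2u_{,jk}u_{,jl}-2\alpha f^{1/2}u_{,kl}$, and the eigenvalues of this matrix are $2\lambda_i(\lambda_i-\alpha f^{1/2})$, where $\lambda_1\lambda_2=f$.

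\emph{Case (i).} Suppose $\Phi$ has an interior maximum at $\mathbf{x}_0$. Then $D^2\Phi\le0$ there, which forces $0<\lambda_i\le\alpha f^{1/2}$. For $\alpha\le0$ this is impossible outright. For $0<\alpha<1$ it gives $f=\lambda_1\lambda_2\le\alpha^2f<f$, a contradiction. The endpoint $\alpha=1$ I would obtain by letting $\alpha\uparrow1$, since $\Phi$ depends continuously on $\alpha$.

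\emph{Case (ii).} For $\alpha>1$, an interior minimum at $\mathbf{x}_0$ would require $\lambda_i\ge\alpha f^{1/2}$, so $f=\lambda_1\lambda_2\ge\alpha^2f>f$, again a contradiction. The case $\alpha=1$ follows by continuity as before. For $\alpha\in[-1,0)$ this Hessian argument fails, because $D^2\Phi>0$ at $\mathbf{x}_0$. Here I would instead use that $\Phi$ is a bounded supersolution of a uniformly elliptic operator on $\Omega\setminus\{\mathbf{x}_0\}$ and that a single point has zero capacity in the plane. The minimum principle therefore extends across $\mathbf{x}_0$.

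This last step is the most delicate point of the plan. If the capacity argument proves awkward for the nondivergence operator $L$, my fallback is to compare $\Phi$ with $\Phi+\varepsilon\log|\mathbf{x}-\mathbf{x}_0|$ and let $\varepsilon\to0$.
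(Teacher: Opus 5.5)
Your argument for (i), and for (ii) with $\alpha\ge 1$, is sound. The paper only cites this theorem, but what you do there is the planar version of its proof of Theorem~\ref{thm:main}. Your identity $L\Phi+W_k\Phi_{,k}=2f^{3/2}(1/\alpha-\alpha)+f'|\nabla u|^2$ is \eqref{eq:3.14} with $H_2\equiv0$ for level curves, and your Hessian test at $\mathbf{x}_0$ is \eqref{eq:3.16}--\eqref{eq:3.21}. Your passage to $\alpha=1$ is rigorous: $\Phi=|\nabla u|^2$ on $\partial\Omega$ for every $\alpha$, so the pointwise bound by $\max_{\partial\Omega}|\nabla u|^2$ (resp.\ $\min$) survives the limit.

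The genuine gap is the branch $\alpha\in[-1,0)$ of (ii), and it cannot be closed, because the claim is false there. For $\alpha<0$ both summands of $\Phi$ are minimized at $\mathbf{x}_0$, so $\Phi(\mathbf{x}_0)=2\alpha\int_{u_{\min}}^0 f^{1/2}(s)\,ds<0\le|\nabla u|^2=\Phi|_{\partial\Omega}$. Concretely, take the disc of radius $R$ and $f\equiv1$. Then $u=(|\mathbf{x}|^2-R^2)/2$ and $\Phi=(1-\alpha)|\mathbf{x}|^2+\alpha R^2$, whose strict minimum for $\alpha<0$ is at the centre.

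The step that breaks is your capacity argument. The coefficients $W_k$ contain $|\nabla u|^{-1}$ and are of order $|\mathbf{x}-\mathbf{x}_0|^{-1}$. This is a critical singularity, for which an isolated point need not be removable, and zero capacity of a point (relevant for bounded lower-order terms) says nothing. In the example $W=\frac{1-\alpha}{\alpha}\,\mathbf{x}/|\mathbf{x}|^2$. On radial functions $L+W_k\partial_k$ then acts as $\partial_r^2+\frac{1+\beta}{r}\partial_r$ with $\beta=\frac{1-\alpha}{\alpha}\le-2$. This operator annihilates $r^{-\beta}$, a $C^2$ function with a strict minimum at $r=0$.

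Your own computation already signals the problem. Since $\nabla\Phi(\mathbf{x}_0)=0$ and $D^2\Phi(\mathbf{x}_0)>0$, the point $\mathbf{x}_0$ is a strict local minimum of $\Phi$. That contradicts any minimum principle extended across $\mathbf{x}_0$, even on a small disc around it. The logarithmic fallback fails for the same reason. Moreover, $\varepsilon\log|\mathbf{x}-\mathbf{x}_0|\to-\infty$ points the wrong way for a minimum principle. As printed, (ii) can only hold for $\alpha\ge1$.
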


The purpose of this paper is to extend the minimum principle in
Theorem~2(ii) to higher dimensions, thereby obtaining a complementary result
to the maximum principle in Theorem~1.
Our main result shows that, under a natural convexity assumption on a
strictly monotone function $U(u(\mathbf{x}))$, the $P$-function $\Phi$ satisfies a
minimum principle in arbitrary dimension.

Throughout the paper, commas denote partial differentiation and
summation over repeated indices from $1$ to $N$ is understood.
We also use the notations
$u_{\min} := \min_{\overline{\Omega}} u$
and
$|\nabla u|_{\min}
:= \min_{\partial \Omega} |\nabla u|$.

More precisely, we prove the following:

\begin{theorem}\label{thm:main}
Assume that $f' \le 0$ and  $\alpha \in [1,\infty)$.
Let $u(\mathbf{x}) \in C^{3}(\Omega)\cap C^{2}(\overline{\Omega})$ be an
admissible solution of \eqref{eq:1.1}.
Assume further that there exists a strictly increasing $C^{2}$ function $U$
such that $U(u(\mathbf{x}))$ is convex in $\Omega$.
Then $\Phi(\mathbf{x},\alpha)$ attains its minimum value on $\partial\Omega$.
\end{theorem}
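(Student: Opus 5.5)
The plan is the classical Payne--Sperb strategy. I will show that $\Phi=\Phi(\mathbf{x},\alpha)$ satisfies, wherever $\nabla u\neq 0$, a differential inequality
$$S_2^{ij}\Phi_{,ij}+W_k\Phi_{,k}\le 0,$$
with coefficients $W_k$ locally bounded away from critical points of $u$. Hopf's maximum principle, applied to $-\Phi$, then rules out an interior minimum of $\Phi$ at noncritical points. Interior critical points of $u$ are handled separately.

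\textbf{Step 1: first and second derivatives of $\Phi$.} We have
$$\Phi_{,k}=2u_{,i}u_{,ik}-2\alpha f^{1/2}u_{,k}.$$
Contracting the second derivatives with $A^*=(S_2^{ij})$ and using the differentiated equation $S_2^{ij}u_{,ijk}=f'u_{,k}$ together with Euler's identity $S_2^{ij}u_{,ij}=2S_2=2f$ gives
$$S_2^{ij}\Phi_{,ij}=2S_2^{ij}u_{,ik}u_{,jk}+2f'|\nabla u|^2-4\alpha f^{3/2}-\alpha f'f^{-1/2}S_2^{ij}u_{,i}u_{,j}.$$

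\textbf{Step 2: work in an adapted frame.} At a point with $\nabla u\neq 0$, rotate coordinates so that $\nabla u=(u_{,1},0,\dots,0)$ and $(u_{,ab})_{a,b\ge 2}$ is diagonal.
\begin{itemize}
\item The relation $\Phi_{,k}=0$ modulo gradient terms gives $u_{,11}=\alpha f^{1/2}+\Phi_{,1}/(2u_{,1})$ and $u_{,1a}=\Phi_{,a}/(2u_{,1})$ for $a\ge2$.
\item The convexity of $U(u)$ means $U'u_{,ij}+U''u_{,i}u_{,j}\ge0$. Since $U'>0$, this yields $u_{,aa}\ge0$ for all $a\ge2$: the level sets are convex and the tangential Hessian is positive semidefinite.
\end{itemize}

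\textbf{Step 3: the sharp matrix inequality (main obstacle).} Expressing everything in terms of $\lambda:=u_{,11}$ and the nonnegative numbers $\mu_a:=u_{,aa}$, I would prove a sharp upper bound for $S_2^{ij}u_{,ik}u_{,jk}=S_1S_2-3S_3$ in terms of $f$ and $u_{,11}$, modulo $\nabla\Phi$-terms. The target is an inequality of the form
$$S_2^{ij}u_{,ik}u_{,jk}\le f\,u_{,11}+(\text{terms in }\nabla\Phi).$$
Substituting $u_{,11}=\alpha f^{1/2}$ then turns the first term into $\alpha f^{3/2}$. The key identity is $S_1S_2-3S_3=\lambda S_2+\sum_a\mu_a\bigl(S_2-\lambda\,\cdot\,\text{stuff}\bigr)$, and I would use $\mu_a\ge0$ together with $S_2=\lambda\sum\mu_a+\sigma_2(\mu)=f$. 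This is the step I expect to be hardest: the bound must be sharp enough that the combination $2fu_{,11}-4\alpha f^{3/2}$ becomes $\le 2\alpha f^{3/2}-4\alpha f^{3/2}<0$ once $\alpha\ge1$ is used.

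\textbf{Step 4: the $f'$ terms.} These are $2f'|\nabla u|^2-\alpha f'f^{-1/2}S_2^{ij}u_{,i}u_{,j}$, and in the adapted frame $S_2^{ij}u_{,i}u_{,j}=u_{,1}^2\sum_a\mu_a$. I would use $S_1\ge u_{,11}=\alpha f^{1/2}$ to bound $\sum_a\mu_a$ from below, and I expect $\alpha\ge1$ to force this combination to be $\le0$ because $f'\le0$. If this fails, I would instead estimate $\sum_a\mu_a\le f/u_{,11}$ from $S_2=f$ and $\sigma_2(\mu)\ge0$, and recombine with Step 3.

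\textbf{Step 5: interior critical points.} Since $U(u)$ is convex, every critical point of $u$ is a minimum point of $U(u)$, hence of $u$, and the critical set is a convex set on which $u=u_{\min}$. If $\Phi$ attained an interior minimum there, its value would be $2\alpha\int_{u_{\min}}^0 f^{1/2}$. I would rule this out by a second-order expansion at such a point $x_0$, where $\nabla\Phi(x_0)=0$ automatically. There
$$S_2^{ij}\Phi_{,ij}=2(S_1S_2-3S_3)-4\alpha f^{3/2},$$
and, by the same matrix inequality from Step 3 now applied with $D^2u\ge0$, this quantity would be negative, contradicting $D^2\Phi(x_0)\ge0$. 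If strict negativity fails in a degenerate case, I would fall back on a perturbation argument: replace $\alpha$ by $\alpha+\varepsilon$, or add $\varepsilon|\mathbf{x}|^2$, then let $\varepsilon\to0$. Combining the noncritical and critical cases, $\Phi$ attains its minimum on $\partial\Omega$.
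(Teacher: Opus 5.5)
Your outline (compute $S_2^{ij}\Phi_{,ij}$, use convexity of the level sets in an adapted frame, apply Hopf away from critical points, treat the critical point separately) matches the paper's. There are two genuine gaps, however: the key estimate in Step 3 is false as stated, and the trace argument in Step 5 gives no contradiction.

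\textbf{Step 3.} At a point where $\nabla\Phi=0$, your frame gives $D^2u=\mathrm{diag}(\lambda,\mu_2,\dots,\mu_N)$ with $\lambda=\alpha f^{1/2}$, and
\[
S_1S_2-3S_3=f\Bigl(\lambda+\sum_a\mu_a\Bigr)-3\lambda\,\sigma_2(\mu)-3\sigma_3(\mu).
\]
For $N=2$ this equals $f(\lambda+\mu)$, which is strictly larger than $f\lambda$. Concretely, take $f\equiv1$, $u=\tfrac12(|\mathbf{x}|^2-R^2)$ on a disk, and $\alpha=1$. Then $\Phi\equiv R^2$, so $S_2^{ij}\Phi_{,ij}=0$, whereas your target would force $S_2^{ij}\Phi_{,ij}\le-2$.

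The tangential term $f\sum_a\mu_a$ cannot be dropped. It must be controlled by the bound you list only as a fallback in Step 4. From $f=\lambda\sum_a\mu_a+\sigma_2(\mu)$ and $\sigma_2(\mu)\ge0$ you get $\sum_a\mu_a\le f^{1/2}/\alpha$, and together with $\sigma_3(\mu)\ge0$,
\[
2(S_1S_2-3S_3)-4\alpha f^{3/2}\le 2\bigl(\alpha^{-1}-\alpha\bigr)f^{3/2}-6\lambda\,\sigma_2(\mu).
\]
This is where $\alpha\ge1$ is genuinely needed. In your version it played no role, since $-2\alpha f^{3/2}<0$ for every $\alpha>0$.

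The paper obtains exactly these two facts elsewhere. Lemma 2 is equivalent to $|\nabla u|^2\sigma_3(\mu)\ge0$, and the Philippin--Safoui inequality (3.12) is equivalent to $|\nabla u|^2\sigma_2(\mu)\ge0$. The same upper bound, not $S_1\ge u_{,11}$, is also what handles the $f'$-terms, which equal
\[
f'|\nabla u|^2\bigl(2-\alpha f^{-1/2}\textstyle\sum_a\mu_a\bigr)\le f'|\nabla u|^2\le0 .
\]

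\textbf{Step 5.} At a critical point $\mathbf{x}_0$ there is no relation $u_{,11}=\alpha f^{1/2}$. The trace $2(S_1S_2-3S_3)-4\alpha f^{3/2}$ can then be positive for $\alpha>1$: for $N=2$, $f=1$, eigenvalues $10$ and $1/10$, and $\alpha=1.1$, it equals $15.8$. So $S_2^{ij}\Phi_{,ij}(\mathbf{x}_0)\ge0$ yields no contradiction, and the perturbations you mention do not change this.

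You need the full Hessian instead. The condition $D^2\Phi(\mathbf{x}_0)=2(D^2u)^2-2\alpha f^{1/2}D^2u\ge0$, together with $D^2u(\mathbf{x}_0)\ge0$, forces each eigenvalue of $D^2u(\mathbf{x}_0)$ to be either $0$ or $\ge\alpha f^{1/2}$. Since $S_2(D^2u)=f>0$, at least two eigenvalues are positive, so $f\ge\alpha^2 f$, i.e.\ $\alpha\le1$. This contradicts $\alpha>1$. It is the paper's (3.16)--(3.21), with the inequalities oriented as they must be at a minimum.

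The case $\alpha=1$ then follows by letting $\alpha\downarrow1$ in $\Phi(\cdot,\alpha)\ge\min_{\partial\Omega}\Phi(\cdot,\alpha)$.
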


The proof relies on a sharp matrix inequality, established in Section~2,
and on Hopf's maximum principle~\cite{Ho27} applied to a suitable second
order differential inequality for $\Phi$, derived in Section~3.
In Section~4 we present several applications, yielding a priori bounds
for different classes of $2$-Hessian boundary value problems.

\section{A sharp matrix inequality and hessian estimates}\label{sec:matrix}

The following inequality plays a basic role in this investigation.

\begin{lemma}\label{lem:matrix}
Let $A=(a_{ik})$ be a real symmetric $N\times N$ matrix which is
\emph{positive semidefinite}. Let $\mathbf{v}=(v_{1},\dots,v_{N})^{t}\in\mathbb{R}^{N}$.
Then
\begin{equation}
\frac{1}{3}\|\mathbf{v}\|^{2}\Big\{2\,\mathrm{tr}(BA)-\mathrm{tr}(B)\,\mathrm{tr}(A)\Big\}
\le 2(A\mathbf{v},B\mathbf{v})-(A\mathbf{v},\mathbf{v})\,\mathrm{tr}(B),
\tag{2.1}\label{eq:2.1}
\end{equation}
where
\begin{equation}
B:=\mathrm{tr}(A)\,A-A^{2}.
\tag{2.2}\label{eq:2.2}
\end{equation}
Here $(\mathbf{v},\mathbf{w})=\sum_{k=1}^{N}v_{k}w_{k}$ denotes the Euclidean scalar product,
$\|\mathbf{v}\|=\sqrt{(\mathbf{v},\mathbf{v})}$ the corresponding norm, and $\mathrm{tr}(A)=\sum_{i=1}^{N}a_{ii}$
the trace.
\end{lemma}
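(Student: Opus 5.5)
The plan is to reduce everything to eigenvalues and identify both sides of \eqref{eq:2.1} with elementary symmetric functions. Both sides are invariant under $A\mapsto Q^{t}AQ$, $\mathbf{v}\mapsto Q^{t}\mathbf{v}$ with $Q$ orthogonal: $B$ transforms the same way as $A$, and traces, scalar products and $\|\mathbf{v}\|$ are unchanged. So we may assume $A=\mathrm{diag}(\lambda_1,\dots,\lambda_N)$ with all $\lambda_i\ge 0$, by positive semidefiniteness. Write $\sigma_k$ for the $k$-th elementary symmetric function of the $\lambda$'s, and $\sigma_k(i)$ for the same function of the $\lambda$'s with $\lambda_i$ removed. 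Then $B=\mathrm{diag}(\mu_1,\dots,\mu_N)$ with
\[
\mu_i=\lambda_i(\sigma_1-\lambda_i)=\lambda_i\,\sigma_1(i),
\]
so that $\mathrm{tr}(B)=2\sigma_2$ and $\mathrm{tr}(BA)=\sum_i\lambda_i^2\sigma_1(i)=\sum_{i\neq j}\lambda_i^2\lambda_j$.

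For the left-hand side, I will use the classical identity $\sigma_1\sigma_2=\sum_{i\neq j}\lambda_i^2\lambda_j+3\sigma_3$. It gives $2\,\mathrm{tr}(BA)-\mathrm{tr}(B)\,\mathrm{tr}(A)=-6\sigma_3$. Hence the left-hand side of \eqref{eq:2.1} equals
\[
-2\sigma_3\|\mathbf{v}\|^2=-2\sigma_3\sum_i v_i^2 .
\]

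For the right-hand side, the diagonal form gives $2(A\mathbf{v},B\mathbf{v})-(A\mathbf{v},\mathbf{v})\,\mathrm{tr}(B)=2\sum_i v_i^2\lambda_i(\mu_i-\sigma_2)$. Since $\sigma_2=\lambda_i\sigma_1(i)+\sigma_2(i)$, we have $\mu_i-\sigma_2=-\sigma_2(i)$. The right-hand side is therefore
\[
-2\sum_i v_i^2\,\lambda_i\,\sigma_2(i).
\]

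Thus \eqref{eq:2.1} reduces termwise to $\lambda_i\sigma_2(i)\le\sigma_3$ for each $i$. This follows from the decomposition $\sigma_3=\lambda_i\sigma_2(i)+\sigma_3(i)$ together with $\sigma_3(i)\ge 0$, which holds because all $\lambda_j\ge 0$. This is the only place where semidefiniteness is used, and it also shows where equality occurs.

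The only step requiring care is the symmetric-function bookkeeping, in particular the identity $\sigma_1\sigma_2=\sum_{i\ne j}\lambda_i^2\lambda_j+3\sigma_3$. I will check it by expanding $(\sum_i\lambda_i)(\sum_{j<k}\lambda_j\lambda_k)$: a product $\lambda_i\lambda_j\lambda_k$ arises from $i\in\{j,k\}$ (the squared terms) or from $i\notin\{j,k\}$, and each triple with distinct indices is counted three times. No other obstacle is expected.
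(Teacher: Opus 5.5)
Your proof is correct and follows essentially the same route as the paper. You diagonalize $A$ orthogonally, use $\mathrm{tr}(B)=2\sigma_2$ and $2\,\mathrm{tr}(BA)-\mathrm{tr}(B)\,\mathrm{tr}(A)=-6\sigma_3$, and reduce the inequality to $\sigma_3-\lambda_i\sigma_2(i)=\sigma_3(i)\ge 0$, which is exactly the paper's identity \eqref{eq:2.12}. Evaluating the two sides separately instead of their difference is only cosmetic, and your convention $\sigma_3(i)=0$ when fewer than three eigenvalues remain covers all $N\ge 2$ at once.
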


\begin{proof}
Since $A$ is real symmetric and positive semidefinite, there exists an orthogonal matrix $P$ such that
\begin{equation}
A=P^{t}DP,
\tag{2.3}\label{eq:2.3}
\end{equation}
where
\begin{equation}
D=\mathrm{diag}(\lambda_{1},\dots,\lambda_{N}),
\qquad \lambda_{k}\ge 0 \ \text{for all }k,
\tag{2.4}\label{eq:2.4}
\end{equation}
and we define the vector
\begin{equation}
\mathbf{w}:=P\mathbf{v}.
\tag{2.5}\label{eq:2.5}
\end{equation}

We then compute the traces appearing in \eqref{eq:2.1}. First
\begin{equation}
\mathrm{tr}(A)=\mathrm{tr}(D)=\sum_{k=1}^{N}\lambda_{k}.
\tag{2.6}\label{eq:2.6}
\end{equation}
Then, from \eqref{eq:2.2}, we have
\begin{equation}
\mathrm{tr}(B)=\mathrm{tr}(A)^{2}-\mathrm{tr}(A^{2})
=2S_{2}(A),
\tag{2.7}\label{eq:2.7}
\end{equation}
and similarly
\begin{align}
2\,\mathrm{tr}(BA)
&=2\,\mathrm{tr}\big(\mathrm{tr}(A)A^{2}-A^{3}\big)
=2\,\mathrm{tr}(A)\,\mathrm{tr}(A^{2})-2\,\mathrm{tr}(A^{3}) \notag\\
&=\mathrm{tr}(B)\,\mathrm{tr}(A)-6S_{3}(A).
\tag{2.8}\label{eq:2.8}
\end{align}

Next, we compute
\begin{equation}
\|\mathbf{v}\|^{2}=\|\mathbf{w}\|^{2}=\sum_{k=1}^{N}w_{k}^{2},
\tag{2.9}\label{eq:2.9}
\end{equation}
\begin{equation}
(A\mathbf{v},\mathbf{v})=(D\mathbf{w},\mathbf{w})
=\sum_{k=1}^{N}\lambda_{k}w_{k}^{2},
\tag{2.10}\label{eq:2.10}
\end{equation}
and, since $B=\mathrm{tr}(A)A-A^{2}$ commutes with $A$ and is diagonalized by the same $P$,
\begin{align}
(A\mathbf{v},B\mathbf{v})
&=(D\mathbf{w},(\mathrm{tr}(D)D-D^{2})\mathbf{w}) \notag\\
&=\sum_{k=1}^{N}\lambda_{k}\big(\mathrm{tr}(D)\lambda_{k}-\lambda_{k}^{2}\big)w_{k}^{2}
=\sum_{k=1}^{N}\Big(\sum_{i\ne k}^{N}\lambda_{i}\Big)\lambda_{k}^{2}w_{k}^{2}.
\tag{2.11}\label{eq:2.11}
\end{align}

We now evaluate the difference ``right-hand side minus left-hand side'' in \eqref{eq:2.1}.
Using \eqref{eq:2.7}--\eqref{eq:2.11} we obtain
\begin{equation}
\begin{alignedat}{2}
& 2(A\mathbf{v},B\mathbf{v})-(A\mathbf{v},\mathbf{v})\,\mathrm{tr}(B)
-\frac{1}{3}\|\mathbf{v}\|^{2}
\Big\{2\,\mathrm{tr}(BA)-\mathrm{tr}(B)\,\mathrm{tr}(A)\Big\} \\[10pt]
&\qquad
= 2\sum_{k=1}^{N}\Big(\sum_{i\ne k}^{N}\lambda_{i}\Big)\lambda_{k}^{2}w_{k}^{2}
-2S_{2}(A)\sum_{k=1}^{N}\lambda_{k}w_{k}^{2}
+2S_{3}(A)\sum_{k=1}^{N}w_{k}^{2} \\[10pt]
&\qquad
= 2\sum_{k=1}^{N}w_{k}^{2}
\Big(S_{3}(A)-\lambda_{k}S_{2}^{(k)}(A)\Big)
= 2\sum_{k=1}^{N}w_{k}^{2}S_{3}^{(k)}(A).
\end{alignedat}
\tag{2.12}\label{eq:2.12}
\end{equation}
where $S_{2}^{(k)}(A)$ denotes the second elementary symmetric function of the $(N-1)$-tuple
$(\lambda_{1},\dots,\widehat{\lambda_{k}},\dots,\lambda_{N})$ (i.e. with $\lambda_k$ omitted), and we set
\begin{equation}
S_{3}^{(k)}(\lambda_{1},\dots,\lambda_{N})
:=
\begin{cases}
S_{3}(\lambda_{1},\dots,\widehat{\lambda_{k}},\dots,\lambda_{N}), & N\ge 4,\\[1mm]
0, & N=3.
\end{cases}
\tag{2.13}\label{eq:2.13}
\end{equation}
If $N\ge 4$, then $S_{3}^{(k)}(\lambda)\ge 0$ since $\lambda_j\ge 0$ for all $j$.
Hence the right-hand side of \eqref{eq:2.12} is nonnegative, which proves \eqref{eq:2.1}.
If $N=3$, \eqref{eq:2.12} shows that \eqref{eq:2.1} reduces to an identity for all symmetric $3\times 3$ matrices.
\end{proof}

\begin{remark}\label{rem:neg_semidef}
If $A$ is \emph{negative semidefinite}, then inequality \eqref{eq:2.1} holds with the opposite sign.
Indeed, all eigenvalues satisfy $\lambda_{k}\le 0$, hence for $N\ge 4$ we have
$S_{3}^{(k)}(\lambda)\le 0$, and the right-hand side of \eqref{eq:2.12} is nonpositive.
\end{remark}

We now state an important consequence of Lemma~\ref{lem:matrix} needed later.

\begin{lemma}\label{lem:Uu}
Let $u\in C^{2}(\Omega)$ and assume that there exists a strictly increasing function
$U\in C^{2}(I)$, defined on an interval $I\supset u(\Omega)$, such that the composition
\[
\widetilde U(\mathbf{x}) := U(u(\mathbf{x}))
\]
is convex in $\Omega\subset\mathbb{R}^{N}$, $N\ge2$.
Then, for every $\mathbf{x}\in\Omega$, the following inequality holds:
\begin{align}
\frac{1}{3}|\nabla u|^{2}
\Big[
2S_{2}^{ij}(D^{2}u)\,u_{,jl}u_{,li}
-2S_{2}(D^{2}u)\,\Delta u
\Big]
\;\le\;& \notag\\
2S_{2}^{ij}(D^{2}u)\,u_{,jk}u_{,k}\,u_{,i\ell}u_{,\ell}
-2S_{2}(D^{2}u)\,u_{,ik}u_{,i}u_{,k}.&
\tag{2.14}\label{eq:2.14}
\end{align}
\end{lemma}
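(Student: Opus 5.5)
The plan is to recognize \eqref{eq:2.14} as exactly inequality \eqref{eq:2.1} with $A=D^{2}u(\mathbf{x})$ and $\mathbf{v}=\nabla u(\mathbf{x})$ at a fixed point. Then, since $D^{2}u$ itself need not be positive semidefinite, we re-run the identity \eqref{eq:2.12} in a coordinate-free form and use the convexity of $U(u)$ only on the hyperplane orthogonal to $\nabla u$.

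\emph{Step 1 (translation).} Since $S_{2}^{ij}(D^{2}u)=\Delta u\,\delta_{ij}-u_{,ij}$, we have $S_{2}^{ij}u_{,jl}=(\mathrm{tr}(A)A-A^{2})_{il}=B_{il}$, with $B$ as in \eqref{eq:2.2}. Hence:
\begin{itemize}
\item $S_{2}^{ij}u_{,jl}u_{,li}=\mathrm{tr}(BA)$;
\item $2S_{2}(D^{2}u)\Delta u=\mathrm{tr}(B)\,\mathrm{tr}(A)$, by \eqref{eq:2.7};
\item $S_{2}^{ij}u_{,jk}u_{,k}u_{,i\ell}u_{,\ell}=(A\mathbf{v},(S_2^{ij})A\mathbf{v})=(A\mathbf{v},B\mathbf{v})$, because $A$ and $(S_2^{ij})$ commute;
\item $2S_{2}\,u_{,ik}u_{,i}u_{,k}=\mathrm{tr}(B)(A\mathbf{v},\mathbf{v})$.
\end{itemize}
So \eqref{eq:2.14} reads precisely \eqref{eq:2.1}, and it suffices to show that the difference ``right minus left'' is nonnegative. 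If $\nabla u(\mathbf{x})=0$, both sides vanish, so we may assume $\mathbf{v}\neq 0$.

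\emph{Step 2 (invariant form of \eqref{eq:2.12}).} The computation \eqref{eq:2.12} does not use the sign of the $\lambda_k$. It shows, for any symmetric $A$, that the difference equals $2\sum_k w_k^{2}S_{3}^{(k)}(A)$. Since $S_3^{(k)}=\partial S_4/\partial\lambda_k$, this is $2\,\mathbf{v}^{t}T(A)\mathbf{v}$, where $T(A)=\big(\partial S_{4}(A)/\partial a_{ij}\big)$ is the Newton-type tensor; it is diagonal with entries $S_3^{(k)}$ in the eigenbasis of $A$. Because $T$ is orthogonally equivariant, we may rotate so that $\mathbf{v}=|\mathbf{v}|e_{1}$. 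Then the difference equals $2|\nabla u|^{2}\,\partial S_{4}/\partial a_{11}=2|\nabla u|^{2}S_{3}(A')$, where $A'$ is the $(N-1)\times(N-1)$ block of $A$ obtained by deleting the first row and column, that is, the compression of $D^{2}u$ to $\nabla u^{\perp}$. For $N\le3$ this quantity is zero, so we get an identity, consistent with the lemma.

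\emph{Step 3 (use of convexity).} We have $D^{2}\widetilde U=U'(u)D^{2}u+U''(u)\nabla u\otimes\nabla u\ge0$ with $U'>0$. Restricted to $\nabla u^{\perp}$, the rank-one term vanishes, so $A'=\frac{1}{U'(u)}(D^{2}\widetilde U)'$ is positive semidefinite. Hence all its eigenvalues are nonnegative, $S_{3}(A')\ge0$, and \eqref{eq:2.14} follows.

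A subtlety: strict monotonicity of $U$ gives only $U'\ge0$. At points where $U'(u)=0$ we would argue by continuity. Such points are isolated in the $u$-values, so approximating from nearby points with $U'>0$ suffices, since both sides of \eqref{eq:2.14} are continuous.

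\emph{Main obstacle.} The main obstacle is Step 2: justifying that $\mathbf{v}^{t}T(A)\mathbf{v}=|\mathbf{v}|^{2}S_{3}(A')$. I would verify it by noting that $\partial S_{4}/\partial a_{11}$ equals the sum of principal $3\times3$ minors not containing index $1$. This is standard, because $S_4$ is the sum of $4\times4$ principal minors, each affine in $a_{11}$. Equivalently, one can check directly that the diagonal representation $\sum_k w_k^2S_3^{(k)}$ and the minor expression agree, since both are quadratic forms in $\mathbf{v}$ that are equivariant under rotations.
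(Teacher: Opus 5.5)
Your proof is correct, and it follows a different route from the paper's.

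\textbf{How the two routes differ.} The paper uses Lemma 1 as a black box on the positive semidefinite matrix $D^{2}\widetilde U$ with $\mathbf v=\nabla u$. It then proves the transfer identity (2.16): the defect of (2.1) for $D^{2}\widetilde U=U'(u)D^{2}u+U''(u)\nabla u\otimes\nabla u$ equals $U'(u)^{3}$ times the defect for $D^{2}u$. This is done by expanding in powers of $U'$ and $U''$ and checking, only in outline, that the coefficients $M_{2,1}$, $M_{1,2}$, $M_{0,3}$ vanish.

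You instead note that (2.12) holds for every symmetric matrix and apply it directly to $A=D^{2}u$, even though $A$ need not be semidefinite. You then write the defect invariantly as $2|\nabla u|^{2}S_{3}(A')$, where $A'$ is the compression of $D^{2}u$ to $\nabla u^{\perp}$. Convexity of $U(u)$ then enters only through $U'(u)A'\ge 0$.

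\textbf{What your route buys.}
\begin{itemize}
\item It explains the paper's cancellations in one line. The compression of $U'D^{2}u+U''\nabla u\otimes\nabla u$ to $\nabla u^{\perp}$ is $U'A'$, and $S_{3}$ is homogeneous of degree $3$; this is exactly (2.16).
\item It shows the hypothesis can be weakened. You only use that the tangential Hessian of $u$ is positive semidefinite where $\nabla u\neq 0$, i.e.\ that $u$ has convex sublevel sets. That is the hypothesis of Theorem 1, and it is what Section 3 actually invokes.
\item It gives a geometric meaning to the defect. In terms of the level sets, it equals $2|\nabla u|^{5}H_{3}$, where $H_3$ is the third elementary symmetric function of the principal curvatures.
\end{itemize}
The paper's route, in exchange, needs no Newton tensor and keeps Lemma 1 as its only matrix input.

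\textbf{One slip to fix.} The zeros of $U'$ for a strictly increasing $C^{2}$ function need not be isolated. They form a closed set with empty interior, which can be a Cantor set. Your continuity argument still works with this correction:
\begin{itemize}
\item $\{U'>0\}$ is open and dense in $I$.
\item At a point with $\nabla u(\mathbf x)\neq 0$, $u$ maps every neighbourhood of $\mathbf x$ onto a neighbourhood of $u(\mathbf x)$. So there are nearby points where $U'(u)>0$, the inequality holds there, and both sides of (2.14) are continuous.
\item The case $\nabla u(\mathbf x)=0$ you already disposed of.
\end{itemize}
The paper simply asserts $U'(u)>0$ from strict monotonicity, so on this point your treatment is more careful than the original.
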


\begin{proof}
Since $\widetilde U$ is convex in $\Omega$, its Hessian matrix
$A:=D^{2}\widetilde U(\mathbf{x})$ is positive semidefinite for every $\mathbf{x}\in\Omega$.
We apply Lemma~\ref{lem:matrix} with this choice of $A$ and with $\mathbf{v}:=\nabla u(\mathbf{x})$.
This yields
\begin{equation}
\begin{aligned}
M(\mathbf{x}) :=\ &
\frac{1}{3}|\nabla u|^{2}
\Big[
2S_{2}^{ij}(D^{2}\widetilde U)\,\widetilde U_{,jl}\widetilde U_{,li}
-2S_{2}(D^{2}\widetilde U)\,\Delta \widetilde U
\Big]\\
&-2S_{2}^{ij}(D^{2}\widetilde U)\,\widetilde U_{,jk}u_{,k}\,
\widetilde U_{,i\ell}u_{,\ell}
+2S_{2}(D^{2}\widetilde U)\,\widetilde U_{,ik}u_{,i}u_{,k}
\;\le\;0,
\end{aligned}
\tag{2.15}\label{eq:2.15}
\end{equation}
for all $\mathbf{x}\in\Omega$.

\medskip\noindent
\textbf{Claim.} For every $\mathbf{x}\in\Omega$ we have the identity
\begin{equation}
\begin{aligned}
M(\mathbf{x})=\ &U'(u(\mathbf{x}))^{3}\Big\{
\frac{1}{3}|\nabla u|^{2}\Big[2S_{2}^{ij}(D^{2}u)\,u_{,jl}u_{,li}
-2S_{2}(D^{2}u)\,\Delta u\Big]\\
&\hspace{20mm}
-2S_{2}^{ij}(D^{2}u)\,u_{,jk}u_{,k}\,u_{,i\ell}u_{,\ell}
+2S_{2}(D^{2}u)\,u_{,ik}u_{,i}u_{,k}\Big\}.
\end{aligned}
\tag{2.16}\label{eq:2.16}
\end{equation}
Assuming the claim, inequality \eqref{eq:2.14} follows immediately from \eqref{eq:2.15}--\eqref{eq:2.16},
since $U'(u)>0$ by strict monotonicity.

\medskip\noindent
\textbf{Proof of the claim.}
Fix $\mathbf{x}_{0}\in\Omega$. Choose an orthonormal frame at $\mathbf{x}_{0}$ such that
\begin{equation}
A:=D^{2}u(\mathbf{x}_{0})=\mathrm{diag}(\lambda_{1},\dots,\lambda_{N}).
\tag{2.17}\label{eq:2.17}
\end{equation}
Set
\begin{equation}
\alpha:=U'(u(\mathbf{x}_{0})),\qquad
\beta:=U''(u(\mathbf{x}_{0})),\qquad
v:=\nabla u(\mathbf{x}_{0}),
\tag{2.18}\label{eq:2.18}
\end{equation}
and introduce the scalars
\begin{equation}
r:=|v|^{2},\qquad
s:=\Delta u(\mathbf{x}_{0})=\mathrm{tr}(A),\qquad
q:=\langle Av,v\rangle,\qquad
t:=|Av|^{2}.
\tag{2.19}\label{eq:2.19}
\end{equation}

Since $\widetilde U(\mathbf{x})=U(u(\mathbf{x}))$, we have at $\mathbf{x}_{0}$:
\begin{equation}
D^{2}\widetilde U=\alpha A+\beta (v\otimes v),\qquad
\Delta\widetilde U=\alpha s+\beta r.
\tag{2.20}\label{eq:2.20}
\end{equation}
Let $B:=v\otimes v$. Then
\begin{equation}
B^{2}=rB,\qquad \mathrm{tr}(B)=r,\qquad \mathrm{tr}(AB)=q.
\tag{2.21}\label{eq:2.21}
\end{equation}

We use the standard identities valid for every real $N\times N$ matrix $M$:
\begin{equation}
S_{2}(M)=\tfrac12\big[(\mathrm{tr}\,M)^{2}-\mathrm{tr}(M^{2})\big],
\qquad
S_{2}^{ij}(M)=(\mathrm{tr}\,M)\delta_{ij}-M_{ij}.
\tag{2.22}\label{eq:2.22}
\end{equation}
A direct computation based on \eqref{eq:2.20}--\eqref{eq:2.22} yields
\begin{equation}
S_{2}(D^{2}\widetilde U)=\alpha^{2}S_{2}(A)+\alpha\beta(sr-q),
\tag{2.23}\label{eq:2.23}
\end{equation}
and
\begin{equation}
S_{2}^{ij}(D^{2}\widetilde U)=\alpha S_{2}^{ij}(A)+\beta\,(r\delta_{ij}-v_{i}v_{j}).
\tag{2.24}\label{eq:2.24}
\end{equation}

At $\mathbf{x}_{0}$, expression \eqref{eq:2.15} can be rewritten as
\begin{equation}
\begin{aligned}
M(\mathbf{x}_{0})=\ &
\frac{1}{3}r\Big(2S_{2}^{ij}(D^{2}\widetilde U)\,\widetilde U_{,jl}\widetilde U_{,li}
-2S_{2}(D^{2}\widetilde U)\,\Delta\widetilde U\Big)\\
&-2S_{2}^{ij}(D^{2}\widetilde U)\,(\widetilde U v)_{j}(\widetilde U v)_{i}
+2S_{2}(D^{2}\widetilde U)\,\langle \widetilde U v,v\rangle,
\end{aligned}
\tag{2.25}\label{eq:2.25}
\end{equation}
where $\widetilde U:=D^{2}\widetilde U(\mathbf{x}_{0})$ and $(\widetilde U v)_{j}=\widetilde U_{,jk}v_{k}$.

Now expand $\widetilde U=\alpha A+\beta B$ in \eqref{eq:2.25} and we group terms according to the coefficients of the powers of $\alpha$ and $\beta$:
\begin{equation}
M(\mathbf{x}_{0})=\alpha^{3}M_{3,0}+\alpha^{2}\beta M_{2,1}+\alpha\beta^{2}M_{1,2}+\beta^{3}M_{0,3}.
\tag{2.26}\label{eq:2.26}
\end{equation}
In what follows we analyze separately each contribution in \eqref{eq:2.26}.

\medskip\noindent
\emph{Step 1: the coefficient of $\alpha^{3}$.}
Keeping only the $\alpha^{3}$ part (i.e. setting $\beta=0$) we obtain
\begin{equation}
\begin{aligned}
M_{3,0}=\ &
\tfrac13 r\Big(2S_{2}^{ij}(A)(A^{2})_{ji}-2S_{2}(A)\,s\Big)
-2S_{2}^{ij}(A)(Av)_{j}(Av)_{i}
+2S_{2}(A)\,q,
\end{aligned}
\tag{2.27}\label{eq:2.27}
\end{equation}
which is exactly the bracket in \eqref{eq:2.16} evaluated at $\mathbf{x}_{0}$.

\medskip\noindent
\emph{Step 2: the coefficient of $\alpha^{2}\beta$ vanishes.}
Using \eqref{eq:2.23}--\eqref{eq:2.24} and the elementary contractions
\begin{equation}
S_{2}^{ij}(A)(AB+BA)_{ji}=2sq-2t,\qquad
(r\delta_{ij}-v_{i}v_{j})(A^{2})_{ji}=r\,\mathrm{tr}(A^{2})-t,
\tag{2.28}\label{eq:2.28}
\end{equation}
together with
\begin{equation}
S_{2}^{ij}(A)(Av)_{j}v_{i}=sq-t,\qquad
(r\delta_{ij}-v_{i}v_{j})(Av)_{j}(Av)_{i}=rt-q^{2},
\tag{2.29}\label{eq:2.29}
\end{equation}
one checks by direct substitution into \eqref{eq:2.25} that all $\alpha^{2}\beta$ terms cancel, hence
\begin{equation}
M_{2,1}=0.
\tag{2.30}\label{eq:2.30}
\end{equation}

\medskip\noindent
\emph{Step 3: the coefficients of  $\alpha\beta^{2}$ and $\beta^{3}$ vanish.}
These terms involve contractions of $(r\delta_{ij}-v_{i}v_{j})$ with expressions built from $B$ and $AB+BA$.
Using $B^{2}=rB$ and the identities
\begin{equation}
(r\delta_{ij}-v_{i}v_{j})(AB+BA)_{ji}=0,\qquad
(r\delta_{ij}-v_{i}v_{j})(B^{2})_{ji}=0,
\tag{2.31}\label{eq:2.31}
\end{equation}
we get
\begin{equation}
M_{1,2}=M_{0,3}=0.
\tag{2.32}\label{eq:2.32}
\end{equation}

\medskip\noindent
Finally, combining now \eqref{eq:2.26}--\eqref{eq:2.32} we obtain
\[
M(\mathbf{x}_{0})=\alpha^{3}M_{3,0}
=U'(u(\mathbf{x}_{0}))^{3}\times\Big(\text{the bracket in \eqref{eq:2.16} at }\mathbf{x}_{0}\Big).
\]
Since $\mathbf{x}_{0}$ is arbitrary, identity \eqref{eq:2.16} holds for all $\mathbf{x}\in\Omega$.
This completes the proof of the claim and hence of the lemma.
\end{proof}

\begin{remark}\label{rem:decreasingU}
If $U$ is strictly \emph{decreasing} (so $U'(u)<0$), the same computation still yields
the identity \eqref{eq:2.16}, but now $U'(u)^{3}<0$. Consequently, inequality \eqref{eq:2.14}
holds with the \emph{opposite} sign.
\end{remark}

\section{Proof of the main result}

For the proof of Theorem~3 we derive a suitable elliptic differential inequality
for the $P$--function $\Phi(\mathbf{x};\alpha)$ defined in \eqref{eq:1.5},
and then apply Hopf's first maximum principle \cite{Ho27}.
Differentiating \eqref{eq:1.5} we obtain
\begin{equation}
\Phi_{,k}=2u_{,kj}u_{,j}-2\alpha f^{1/2}(u)\,u_{,k},
\tag{3.1}\label{eq:3.1}
\end{equation}
and
\begin{equation}
\begin{aligned}
S_2^{kl}\Phi_{,kl}
&=2S_2^{kl}u_{,klj}u_{,j}+2S_2^{kl}u_{,kj}u_{,jl}
-\alpha f^{-1/2}(u)f'(u)\,S_2^{kl}u_{,k}u_{,l}\\
&\quad -2\alpha f^{1/2}(u)\,S_2^{kl}u_{,kl},
\end{aligned}
\tag{3.2}\label{eq:3.2}
\end{equation}
where $(S_2^{kl})$ is the positive definite matrix defined in \eqref{eq:1.4}.

We compute separately the terms in \eqref{eq:3.2}.
Since $S_2$ is homogeneous of degree $2$, Euler's identity together with \eqref{eq:1.1} yields
\begin{equation}
S_2^{kl}u_{,kl}=2S_2(D^2u)=2f(u).
\tag{3.3}\label{eq:3.3}
\end{equation}

Next, on each level set $L_t=\{u=t\}$ we recall the identity (see Brandolini et al.~\cite{BNST08})
\begin{equation}
S_2^{ij}u_{,i}u_{,l}u_{,lj}
=S_2(D^2u)|\nabla u|^2-H_2|\nabla u|^3,
\tag{3.4}\label{eq:3.4}
\end{equation}
where $H_2$ denotes the second mean curvature of $L_t$, i.e. the second elementary symmetric function of the principal curvatures of $L_t$.

Using \eqref{eq:3.1} we have $u_{,lj}u_{,l}=\alpha f^{1/2}(u)\,u_{,j}+\tfrac12\Phi_{,j}$, hence \eqref{eq:3.4} gives
\begin{equation}
S_2^{kl}u_{,k}u_{,l}
=\frac{1}{\alpha f^{1/2}(u)}
\Big[S_2(D^2u)|\nabla u|^2-H_2|\nabla u|^3\Big]+\cdots,
\tag{3.5}\label{eq:3.5}
\end{equation}
in $\Omega\setminus\{\mathbf{K}\}$, where $\mathbf{K}$ denotes the unique critical point of $u$ and the dots stand, here and throughout the remainder of the paper, for terms involving first-order derivatives of $\Phi$. Note that the uniqueness of $\mathbf{K}$ is a consequence of the assumption that $U(u)$ is convex.

Differentiating \eqref{eq:1.1} and using the chain rule we obtain
\begin{equation}
S_2^{kl}u_{,klj}u_{,j}=f'(u)|\nabla u|^2.
\tag{3.6}\label{eq:3.6}
\end{equation}

To estimate $S_2^{kl}u_{,kj}u_{,jl}$ we use inequality \eqref{eq:2.14} from Section~2:
\begin{equation}
\begin{aligned}
\frac{1}{3}|\nabla u|^2
\Big[2S_2^{ij}u_{,jl}u_{,li}-2S_2(D^2u)\Delta u\Big]
\le\;&
2S_2^{ij}u_{,kj}u_{,k}\,u_{,i\ell}u_{,\ell}\\
&-2S_2(D^2u)\,u_{,ik}u_{,i}u_{,k}.
\end{aligned}
\tag{3.7}\label{eq:3.7}
\end{equation}

Using \eqref{eq:3.1} we infer
\begin{equation}
u_{,ij}u_{,i}u_{,j}=\alpha f^{1/2}(u)|\nabla u|^2+\cdots,
\tag{3.8}\label{eq:3.8}
\end{equation}
and, combining \eqref{eq:3.1} with \eqref{eq:3.4},
\begin{equation}
S_2^{ij}u_{,kj}u_{,k}\,u_{,i\ell}u_{,\ell}
=\alpha f^{1/2}(u)\Big[S_2(D^2u)|\nabla u|^2-H_2|\nabla u|^3\Big]+\cdots.
\tag{3.9}\label{eq:3.9}
\end{equation}

Substituting \eqref{eq:3.8}--\eqref{eq:3.9} into \eqref{eq:3.7} we obtain
\begin{equation}
2S_2^{kl}u_{,kj}u_{,jl}
\le 2f(u)\Delta u
-6\alpha f^{1/2}(u)\,H_2|\nabla u|
+\cdots,
\tag{3.10}\label{eq:3.10}
\end{equation}
in $\Omega\setminus\{\mathbf{K}\}$.

Collecting \eqref{eq:3.3}, \eqref{eq:3.5}, \eqref{eq:3.6} and \eqref{eq:3.10} in \eqref{eq:3.2} we deduce
\begin{equation}
\begin{aligned}
S_2^{kl}\Phi_{,kl}\ge\;&
f'(u)|\nabla u|^2
+2f(u)\Delta u
-6\alpha f^{1/2}(u)\,H_2|\nabla u|\\
&\quad
+\frac{f'(u)}{f(u)}H_2|\nabla u|^3
-4\alpha f^{3/2}(u)
+\cdots,
\end{aligned}
\tag{3.11}\label{eq:3.11}
\end{equation}
in $\Omega\setminus\{\mathbf{K}\}$.

We next use the inequality of Philippin--Safoui~\cite{PS04}
\begin{equation}
|\nabla u|^2S_2(D^2u)
\ge
u_{,il}u_{,i}u_{,l}\Delta u-u_{,ik}u_{,k}u_{,il}u_{,l},
\tag{3.12}\label{eq:3.12}
\end{equation}
together with \eqref{eq:3.8}, to obtain
\begin{equation}
\Delta u\le \frac{1}{\alpha}f^{1/2}(u)+\alpha f^{1/2}(u)+\cdots.
\tag{3.13}\label{eq:3.13}
\end{equation}

Substituting \eqref{eq:3.13} into \eqref{eq:3.11}, we arrive at
\begin{equation}
\begin{aligned}
L\Phi
:=\; S_2^{kl}\Phi_{,kl}+W_k\Phi_{,k}
\;\le \;&
f'(u)|\nabla u|^2
+\Big(\frac{2}{\alpha}-2\alpha\Big)f^{3/2}(u) \\[4pt]
&\;
- H_2\Big(
6\alpha f^{1/2}(u)\,|\nabla u|
-\frac{f'(u)}{f(u)}|\nabla u|^3
\Big),
\end{aligned}
\tag{3.14}\label{eq:3.14}
\end{equation}
in $\Omega\setminus\{\mathbf{K}\}$, where $W_k$ is a smooth vector field.

It then follows from \eqref{eq:3.14} and Hopf's first maximum principle \cite{Ho27}
that the $P$--function $\Phi(\mathbf{x};\alpha)$ attains its \emph{minimum}
either on $\partial\Omega$ or at $\mathbf{K}\in\Omega$, unless it is constant
on $\overline{\Omega}$.

We now analyze separately the following subcases.

\medskip
\noindent\textbf{I.1. The subcase $\alpha>1$.}\medskip

Assume by contradiction that the minimum of $\Phi(\mathbf{x};\alpha)$
is attained at the (unique) critical point of $u$.
By a translation and a rotation if necessary, we may assume that this point
coincides with the origin $\mathbf{O}$ and that
$u_{,ij}(\mathbf{O})=0$ for all $i\neq j$.
Then
\begin{equation}
\Phi_{,ii}(\mathbf{O},\alpha)
=2u_{,ii}(\mathbf{O})\big[u_{,ii}(\mathbf{O})-\alpha f^{1/2}(\mathbf{O})\big],
\qquad i=1,\dots,N.
\tag{3.16}\label{eq:3.16}
\end{equation}

Since $u$ has convex level sets and $\mathbf{O}$ is the unique critical point of u, it follows that $\mathbf{O}$ is a strict interior minimum of u. Consequently, the Hessian matrix $D^2u(\mathbf{O})$ is positive definite, hence
\begin{equation}
u_{,ii}(\mathbf{O})>0,
\qquad i=1,\dots,N.
\tag{3.17}\label{eq:3.17}
\end{equation}
Moreover, because $\mathbf{O}$ is a minimum point of $\Phi(\cdot;\alpha)$,
\begin{equation}
\Phi_{,ii}(\mathbf{O},\alpha)\le 0,
\qquad i=1,\dots,N.
\tag{3.18}\label{eq:3.18}
\end{equation}
Combining \eqref{eq:3.16}--\eqref{eq:3.18}, we obtain
\begin{equation}
u_{,ii}(\mathbf{O})\le \alpha f^{1/2}(\mathbf{O}),
\qquad i=1,\dots,N.
\tag{3.19}\label{eq:3.19}
\end{equation}
Hence
\begin{equation}
S_2(D^2u)(\mathbf{O})
\le \binom{N}{2}\alpha^2 f(\mathbf{O}).
\tag{3.20}\label{eq:3.20}
\end{equation}
Using \eqref{eq:1.1}, this implies
\begin{equation}
\alpha\le \binom{N}{2}^{-1/2}\le 1,
\tag{3.21}\label{eq:3.21}
\end{equation}
which contradicts the assumption $\alpha>1$.

Therefore, for $\alpha>1$, the auxiliary function $\Phi(\mathbf{x};\alpha)$
attains its \emph{minimum} on the boundary $\partial\Omega$, unless it is
constant on $\overline{\Omega}$.
Since the strict inequality in \eqref{eq:3.14} (with $f'\le0$ and $H_2\ge0$)
prevents $\Phi(\cdot;\alpha)$ from being constant, we conclude that
\begin{equation}
\min_{\overline{\Omega}}\Phi(\mathbf{x};\alpha)
=\min_{\partial\Omega}\Phi(\mathbf{x};\alpha).
\tag{3.22}\label{eq:3.22}
\end{equation}

\medskip
\noindent\textbf{I.2. The subcase $\alpha=1$.}\medskip

This limiting case is handled by a continuity argument.
From the previous analysis we know that, for each $\alpha>1$, 
$\Phi(\mathbf{x};\alpha)$ attains its minimum on $\partial\Omega$.
As $\alpha$ decreases continuously from values larger than $1$ down to $1$, 
the location of the minimum point moves continuously along $\partial\Omega$
(or possibly remains fixed).

By Hopf's maximum principle applied to \eqref{eq:3.14}, 
$\Phi(\mathbf{x};1)$ can attain its minimum only at the unique
critical point $\mathbf{K}\in\Omega$ or on $\partial\Omega$, unless it is constant
on $\overline{\Omega}$.
Since the minimum cannot jump discontinuously from $\partial\Omega$ (for $\alpha>1$)
to the interior point $\mathbf{K}$ (for $\alpha=1$), we conclude by continuity that
$\Phi(\mathbf{x};1)$ attains its minimum on $\partial\Omega$.

\section{Applications: a priori estimates}\label{sec:applications}

In this section we apply Theorem~\ref{thm:main} to derive a priori bounds 
for several classical 2-Hessian boundary value problems. The key ingredient 
is the verification that the convexity hypothesis in Lemma~\ref{lem:Uu} 
holds for suitable strictly increasing transformations $U$. 

Such convexity and power-convexity properties have been established by 
X.-N. Ma and L. Xu \cite{MX08}, P. Liu, X.-N. Ma and L. Xu \cite{LMX10}, 
P. Salani \cite{Sa12}, and Y. Ye \cite{Ye13} for various classes of 
2-Hessian equations using different techniques. These results enable us 
to apply our minimum principle and obtain explicit bounds.

\medskip
\noindent
\textbf{Application 1.}
Let $u$ be the solution of
\begin{equation}
S_{2}(D^{2}u)=1 \quad \text{in }\Omega,
\qquad
u=0 \quad \text{on }\partial\Omega,
\tag{4.1}\label{eq:4.1}
\end{equation}
where $\Omega$ is a bounded convex domain in $\mathbb{R}^{3}$.
It is well known that $u<0$ in $\Omega$.
X.-N.~Ma and L.~Xu proved in \cite{MX08} that the function
\begin{equation}
\widetilde U(\mathbf{x}) := U(u(\mathbf{x})),
\qquad
U(t):=-\sqrt{-t},
\tag{4.2}\label{eq:4.2}
\end{equation}
is strictly convex in $\Omega$.
Note that $U\in C^{2}((-\infty,0))$ and $U'(t)=\frac{1}{2}(-t)^{-1/2}>0$ for $t<0$,
so $U$ is strictly increasing on $u(\Omega)\subset(-\infty,0)$.
Hence the assumptions of Lemma~\ref{lem:Uu} are satisfied.

It follows from Theorem~3 that, for every $\alpha\ge1$,
\begin{equation}
\Phi(\mathbf{x};\alpha)
:=|\nabla u(\mathbf{x})|^{2}-2\alpha u(\mathbf{x})
\tag{4.3}\label{eq:4.3}
\end{equation}
attains its minimum value on $\partial\Omega$.
Choosing $\alpha=1$ and using that
$\min_{\partial\Omega}\Phi(\mathbf{x};1)=|\nabla u|_{\min}^{2}$, we obtain
\begin{equation}
-2u(\mathbf{x})
\ge
|\nabla u|_{\min}^{2}-|\nabla u(\mathbf{x})|^{2},
\qquad \mathbf{x}\in\Omega.
\tag{4.4}\label{eq:4.4}
\end{equation}

Evaluating \eqref{eq:4.4} at the (unique) critical point of $u$, where
$\nabla u=0$, we deduce
\begin{equation}
-2u_{\min}\ge |\nabla u|_{\min}^{2}.
\tag{4.5}\label{eq:4.5}
\end{equation}
In dimension $N=2$, equality in \eqref{eq:4.4}--\eqref{eq:4.5} holds if and only if
$\Omega$ is a disk (see \cite{EMP21}).

\medskip
\noindent
\textbf{Application 2.}
Let $u$ be the first eigenfunction of the $2$-Hessian operator, that is,
\begin{equation}
S_{2}(D^{2}u)=\lambda_{1}(-u)^{2} \quad \text{in }\Omega,
\qquad
u=0 \quad \text{on }\partial\Omega,
\tag{4.6}\label{eq:4.6}
\end{equation}
where $\Omega$ is a bounded convex domain in $\mathbb{R}^{3}$.
Then $u<0$ in $\Omega$.
P.~Liu, X.-N.~Ma and L.~Xu showed in \cite{LMX10} that
\begin{equation}
\widetilde U(\mathbf{x}) := U(u(\mathbf{x})),
\qquad
U(t):=-\log(-t),
\tag{4.7}\label{eq:4.7}
\end{equation}
is strictly convex in $\Omega$.
Since $U\in C^{2}((-\infty,0))$ and $U'(t)=-\frac{1}{t}>0$ for $t<0$,
the hypotheses of Lemma~\ref{lem:Uu} are satisfied.

Applying Theorem~3 with $\alpha=1$, we deduce that
\begin{equation}
\Phi(\mathbf{x};1)
=|\nabla u(\mathbf{x})|^{2}-\frac{2}{3}\lambda_{1}u(\mathbf{x})^{3}
\tag{4.8}\label{eq:4.8}
\end{equation}
attains its minimum value on $\partial\Omega$.
Consequently,
\begin{equation}
-\frac{2}{3}\lambda_{1}u(\mathbf{x})^{3}
\ge
|\nabla u|_{\min}^{2}-|\nabla u(\mathbf{x})|^{2},
\qquad \mathbf{x}\in\Omega,
\tag{4.9}\label{eq:4.9}
\end{equation}
and in particular, at the critical point of $u$,
\begin{equation}
-\frac{2}{3}\lambda_{1}u_{\min}^{3}
\ge
|\nabla u|_{\min}^{2}.
\tag{4.10}\label{eq:4.10}
\end{equation}

\medskip
\noindent
\textbf{Application 3.}
Let $u$ be the solution of
\begin{equation}
S_{2}(D^{2}u)=\lambda(-u)^{p} \quad \text{in }\Omega,
\qquad
u=0 \quad \text{on }\partial\Omega,
\tag{4.11}\label{eq:4.11}
\end{equation}
where $\Omega$ is a bounded convex domain in $\mathbb{R}^{N}$,
$\lambda>0$, and $u<0$ in $\Omega$.

Assume first that $0<p<2$.
Y.~Ye proved in \cite{Ye13} that the function
\begin{equation}
\widetilde U(\mathbf{x}) := U(u(\mathbf{x})),
\qquad
U(t):=-(-t)^{\frac{2-p}{4}},
\tag{4.12}\label{eq:4.12}
\end{equation}
is strictly convex in $\Omega$.
Since $\frac{2-p}{p}>0$, we have $U'(t)>0$ for $t<0$, hence $U$ is strictly increasing
on $u(\Omega)\subset(-\infty,0)$, and Lemma~\ref{lem:Uu} applies.

Applying Theorem~3 with $\alpha=1$, we conclude that
\begin{equation}
\Phi(\mathbf{x};1)
=
|\nabla u(\mathbf{x})|^{2}
+\frac{2\lambda}{p+1}(-u(\mathbf{x}))^{p+1},
\tag{4.13}\label{eq:4.13}
\end{equation}
attains its minimum value on $\partial\Omega$.
As a consequence,
\begin{equation}
\frac{2\lambda}{p+1}(-u(\mathbf{x}))^{p+1}
\ge
|\nabla u|_{\min}^{2}-|\nabla u(\mathbf{x})|^{2},
\qquad \mathbf{x}\in\Omega,
\tag{4.14}\label{eq:4.14}
\end{equation}
and at the critical point of $u$,
\begin{equation}
\frac{2\lambda}{p+1}(-u_{\min})^{p+1}
\ge
|\nabla u|_{\min}^{2}.
\tag{4.15}\label{eq:4.15}
\end{equation}

\begin{remark}
When $p>2$, the exponent $\frac{2-p}{4}$ is negative and the function
$U(t)=-(-t)^{(2-p)/4}$ is strictly \emph{decreasing} on $(-\infty,0)$.
In this case Lemma~\ref{lem:Uu} (as stated with $U'>0$) cannot be applied directly.
\end{remark}

\end{document}